\documentclass[reqno,b5paper]{amsart}

\usepackage{amsmath}
\usepackage{amssymb}
\usepackage{amsthm}
\usepackage{enumerate}
\usepackage[mathscr]{eucal}
\usepackage{eqlist}

\setlength{\textwidth}{121.9mm} \setlength{\textheight}{176.2mm}


\theoremstyle{plain}
\newtheorem{theorem}{Theorem}[section]

\newtheorem{lemma}[theorem]{Lemma}

\newtheorem{corollary}[theorem]{Corollary}
\theoremstyle{definition}
\newtheorem{definition}[theorem]{Definition}

\DeclareMathOperator{\Tr}{Tr}
\DeclareMathOperator{\Sp}{Sp}
\newtheorem{remark}{Remark}
\numberwithin{equation}{section}
\usepackage{enumerate}


\numberwithin{equation}{section}



\begin{document}
\title[Hermite-Hadamard type inequality for  operator $G$-convex]%
{Hermite-Hadamard type inequalities for operator geometrically convex functions}
\author[A. Taghavi, V. Darvish, H. M. Nazari, S. S. Dragomir]%
{A. Taghavi,  V. Darvish, H. M. Nazari, S. S. Dragomir}

\newcommand{\acr}{\newline\indent}
\address{Department of Mathematics,\\ Faculty of Mathematical
Sciences,\\ University of Mazandaran,\\ P. O. Box 47416-1468,\\
Babolsar, Iran.} \email{taghavi@umz.ac.ir,  vahid.darvish@vu.edu.au, m.nazari@stu.umz.ac.ir} 
\address{Mathematics, School of Engineering and Science Victoria University,\\ PO Box 14428, Melbourne City, MC 8001, Australia.}
\address{School of Computational and Applied Mathematics, University of the Witwatersrand, Private Bag 3, Johannesburg 2050, South Africa.} \email{sever.dragomir@vu.edu.au}

\subjclass[2010]{47A63, 15A60, 47B05, 47B10, 26D15}
\keywords{Hermite-Hadamard inequality, Operator geometrically convex function, Trace inequality}

\begin{abstract}
In this paper, we introduce the concept of operator  geometrically convex functions for positive linear operators and prove some Hermite-Hadamard type inequalities for these functions. As applications, we obtain trace inequalities for operators which give some refinements of previous results.
\end{abstract}

\maketitle

\section{\textbf{Introduction and preliminaries }}\label{intro}
Let $\mathcal{A}$ be a sub-algebra of $B(H)$ stand for the commutative  $C^{*}$-algebra of  all bounded linear
operators on a complex Hilbert space $H$ with inner
product $\langle \cdot,\cdot\rangle$. An operator $A\in \mathcal{A}$ is positive and write $A\geq0$ if $\langle
Ax,x\rangle\geq0$ for all $x\in H$. Let $\mathcal{A}^{+}$ stand for all strictly positive operators in $\mathcal{A}$.

Let $A$ be a self-adjoint operator in $\mathcal{A}$. The Gelfand map establishes a $\ast$-isometrically isomorphism $\Phi$ between the set $C(\Sp(A))$ of all continuous functions defined on the spectrum of $A$, denoted $\Sp(A)$, and the $C^{*}$-algebra $C^{*}(A)$ generated by $A$ and the identity operator $1_{H}$ on $H$ as follows:

For any $f,g\in C(\Sp(A)))$ and any $\alpha, \beta\in\mathbb{C}$ we have:
\begin{itemize}
\item
$\Phi(\alpha f+\beta g)=\alpha \Phi(f)+\beta \Phi(g);$
\item
$\Phi(fg)=\Phi(f)\Phi(g)$ and $ \Phi(\bar{f})=\Phi(f)^{*};$
\item
$\|\Phi(f)\|=\|f\|:=\sup_{t\in \Sp(A)}|f(t)|;$
\item
$\Phi(f_{0})=1_{H}$ and $\Phi(f_{1})=A,$ where $f_{0}(t)=1$ and $f_{1}(t)=t$, for $t\in \Sp(A)$.
\end{itemize}
with this notation we define
$$f(A)=\Phi(f) \ \text{for all} \ \ f\in C(\Sp(A))$$
 and we call it the continuous functional calculus for a self-adjoint operator $A$.
 
 If $A$ is a self-adjoint operator and $f$ is a real valued continuous function on $\Sp(A)$, then $f(t)\geq0$ for any $t\in \Sp(A)$ implies that $f(A)\geq0$, i.e. $f(A)$ is a positive operator on $H$. Moreover, if both $f$ and $g$ are real valued functions on $\Sp(A)$ then the following important property holds:
\begin{equation}\label{e3}
f(t)\geq g(t) \ \ \text{for any} \ \ t\in \Sp(A)\ \ \text{implies that} \ \ f(A)\geq g(A),
\end{equation}
in the operator order of $B(H)$, see \cite{zhu}.\\

Let $I$ be an interval in $\mathbb{R}$. Then $f:I\to\mathbb{R}$ is said to be convex function if 
$$f(\lambda a+(1-\lambda)b)\leq \lambda f(a)+(1-\lambda) f(b)$$
for $a,b\in I$ and $\lambda\in [0,1]$.

The following inequality holds for any convex function $f$ defined on $\mathbb{R}$
\begin{equation}\label{ro}
(b-a)f\left(\frac{a+b}{2}\right)\leq \int_{a}^{b} f(x)dx\leq (b-a)\frac{f(a)+f(b)}{2}, \ \ a,b\in\mathbb{R}.
\end{equation}
It was firstly discovered by Hermite in 1881 in the journal Mathesis (see \cite{mit}). But this result was nowhere mentioned in the mathematical literature and was not widely known as Hermite's result \cite{pec}.

Beckenbach, a leading expert on the history and the theory of convex functions, wrote that this inequality was proven by
Hadamard in 1893 \cite{bec}. In 1974, Mitrinovi\v{c} found Hermite’s note in Mathesis \cite{mit}. Since (\ref{ro}) was known as Hadamard’s
inequality, the inequality is now commonly referred as the Hermite-Hadamard inequality \cite{pec}.

\begin{definition}\cite{nic}
A continuous function $f:I\subset \mathbb{R}^{+}\to\mathbb{R}^{+}$ is said to be geometrically convex function (or multiplicatively  convex function) if 
$$f(a^{\lambda}b^{1-\lambda})\leq f(a)^{\lambda}f(b)^{1-\lambda}$$
for $a,b\in I$ and $\lambda\in[0,1]$.
\end{definition}
The author of \cite{sad} established  the Hermite-Hadamard type inequalities for geometrically convex functions as follows:
\begin{theorem}
Let $f:I\subseteq \mathbb{R}^{+}\to \mathbb{R}^{+}$ be a geometrically convex function and $a,b\in I$ with $a< b$. If $f\in L^{1}[a,b]$, then
\begin{eqnarray*}
f(\sqrt{ab})&\leq& \frac{1}{\ln b-\ln a}\int_{a}^{b}\frac{1}{t}\sqrt{f(t)f\left(\frac{ab}{t}\right)}dt\\
&\leq& \frac{1}{\ln b-\ln a}\int_{a}^{b}\frac{f(t)}{t}dt\\
&\leq& \frac{f(b)-f(a)}{\ln f(b)-\ln f(a)}\\
&\leq&\frac{f(a)+f(b)}{2}.
\end{eqnarray*}
\end{theorem}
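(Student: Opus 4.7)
The plan is to reduce everything to a log-convex function on a logarithmic scale. Setting $h(u):=f(\e^u)$, the hypothesis that $f$ is geometrically convex on $[a,b]$ translates exactly into $h$ being log-convex (hence convex) on $[\ln a,\ln b]$, because
\[
h(\lambda u_1+(1-\lambda)u_2)=f(\e^{\lambda u_1+(1-\lambda)u_2})\leq f(\e^{u_1})^{\lambda}f(\e^{u_2})^{1-\lambda}=h(u_1)^{\lambda}h(u_2)^{1-\lambda}.
\]
This reformulation is what will make the Hermite--Hadamard machinery applicable.

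I would then handle the four inequalities in order. For the first, I would apply geometric convexity with $\lambda=1/2$ to the pair $(t,ab/t)$, whose geometric mean is $\sqrt{ab}$; this gives the pointwise bound $f(\sqrt{ab})\leq\sqrt{f(t)f(ab/t)}$, and dividing by $t$ and integrating over $[a,b]$ followed by normalization by $(\ln b-\ln a)^{-1}$ yields the left-most inequality. For the second, the pointwise AM--GM bound $\sqrt{f(t)f(ab/t)}\leq \tfrac12(f(t)+f(ab/t))$ reduces matters to showing $\int_a^b f(ab/t)\,\d t/t=\int_a^b f(t)\,\d t/t$; this is immediate from the involution $s=ab/t$ of $[a,b]$, under which $\d t/t$ is preserved up to sign.

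For the third inequality I would substitute $u=\ln t$ to rewrite the middle integral as $\int_{\ln a}^{\ln b}h(u)\,\d u$, then use log-convexity of $h$ to bound $h(u)\leq h(\ln a)^{(\ln b-u)/(\ln b-\ln a)}\,h(\ln b)^{(u-\ln a)/(\ln b-\ln a)}$; integrating the right-hand side via the elementary identity $\int_0^1 r^{\gamma}\,\d\gamma=(r-1)/\ln r$ with $r=f(b)/f(a)$ produces exactly the logarithmic mean $(f(b)-f(a))/(\ln f(b)-\ln f(a))$. The final inequality is then the classical logarithmic--arithmetic mean inequality $L(x,y)\leq (x+y)/2$ for positive $x,y$. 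The main obstacle is the third step, where one has to recognize that the integrated log-convex envelope collapses to exactly the logarithmic mean; the other three bounds are essentially pointwise consequences of the definitions or of standard mean inequalities.
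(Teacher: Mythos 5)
Your argument is correct and complete. One point of comparison worth noting: the paper does not actually prove this theorem --- it is quoted from \.{I}scan \cite{sad} --- so there is no in-paper proof to match; the paper only proves the neighbouring refinements. Your proof, however, is assembled from exactly the ingredients the paper uses for those: the observation that $h(u)=f(\e^{u})$ is log-convex is the content of Lemma~\ref{htn} (convexity of $F=\log\circ f\circ\exp$), and your substitution $u=\ln t$ is the paper's change of variables $t=a^{\lambda}b^{1-\lambda}$ in disguise. All four steps check out: the first is the pointwise bound $f(\sqrt{ab})\le\sqrt{f(t)f(ab/t)}$ (valid since $t,ab/t\in[a,b]$ with geometric mean $\sqrt{ab}$) integrated against $\d t/t$; the second combines pointwise AM--GM with the involution $s=ab/t$, which preserves $\d t/t$ up to sign and swaps the endpoints, so $\int_a^b f(ab/t)\,\d t/t=\int_a^b f(t)\,\d t/t$; the third integrates the log-convex chord bound and collapses, via $\int_0^1 r^{\gamma}\,\d\gamma=(r-1)/\ln r$ with $r=f(b)/f(a)$, to the logarithmic mean of $f(a)$ and $f(b)$; the fourth is the classical inequality $L\le A$. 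The only caveats are cosmetic: the identity in the third step needs $f(a)\neq f(b)$ (if $f(a)=f(b)$ the stated logarithmic mean must be read as the limit $f(a)$, and the bound holds trivially), and integrability of the first integrand is guaranteed by your own AM--GM domination together with the $L^{1}$ hypothesis, so the whole chain is legitimate as stated.
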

\noindent By changing variables $t=a^{\lambda}b^{1-\lambda}$ we have $$\frac{1}{\ln b-\ln a}\int_{a}^{b}\frac{f(t)}{t}dt=\int_{0}^{1}f(a^{\lambda}b^{1-\lambda})d\lambda.$$
\begin{remark}
It is well-known that for positive numbers $a$ and $b$
$$\min\{a,b\}\leq G(a,b)=\sqrt{ab}\leq L(a,b)=\frac{b-a}{\ln b-\ln a}\leq A(a,b)=\frac{a+b}{2}\leq \max\{a,b\}.$$
\end{remark}
The author of \cite{sad2} mentioned the following inequality, but here we provide a short proof which gives a refinement for above theorem.
\begin{theorem}
Let $f$ be a geometrically convex function defined on $I$ a sub-interval of $\mathbb{R}^{+}$. Then, we have
$$f(\sqrt{ab})\leq \frac{1}{\ln b-\ln a}\int_{a}^{b}\frac{1}{t}\sqrt{f(t)f\left(\frac{ab}{t}\right)}dt\leq \sqrt{f(a)f(b)}$$
for $a,b\in I$.
\end{theorem}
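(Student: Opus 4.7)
The plan is to reduce everything to the parametric integral
\[
\frac{1}{\ln b-\ln a}\int_{a}^{b}\frac{1}{t}\sqrt{f(t)f\!\left(\frac{ab}{t}\right)}\,dt
=\int_{0}^{1}\sqrt{f(a^{\lambda}b^{1-\lambda})\,f(a^{1-\lambda}b^{\lambda})}\,d\lambda,
\]
obtained via the substitution $t=a^{\lambda}b^{1-\lambda}$ already recorded in the excerpt (note that $ab/t = a^{1-\lambda}b^{\lambda}$). Once the integrand is in this symmetric form, both inequalities fall out of the definition of geometric convexity applied pointwise in $\lambda$.

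For the left inequality, I would observe that $\sqrt{a^{\lambda}b^{1-\lambda}\cdot a^{1-\lambda}b^{\lambda}}=\sqrt{ab}$ for every $\lambda\in[0,1]$. Applying the geometric convexity of $f$ to the two points $a^{\lambda}b^{1-\lambda}$ and $a^{1-\lambda}b^{\lambda}$ with weight $1/2$ therefore gives
\[
f(\sqrt{ab})\le \sqrt{f(a^{\lambda}b^{1-\lambda})\,f(a^{1-\lambda}b^{\lambda})}
\]
for all $\lambda\in[0,1]$, and integrating over $[0,1]$ yields $f(\sqrt{ab})\le \int_{0}^{1}\sqrt{f(a^{\lambda}b^{1-\lambda})f(a^{1-\lambda}b^{\lambda})}\,d\lambda$.

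For the right inequality, I would apply geometric convexity directly to each factor:
\[
f(a^{\lambda}b^{1-\lambda})\le f(a)^{\lambda}f(b)^{1-\lambda},\qquad
f(a^{1-\lambda}b^{\lambda})\le f(a)^{1-\lambda}f(b)^{\lambda}.
\]
Multiplying these two estimates gives $f(a^{\lambda}b^{1-\lambda})f(a^{1-\lambda}b^{\lambda})\le f(a)f(b)$, so after taking a square root and integrating over $\lambda\in[0,1]$ the right bound $\sqrt{f(a)f(b)}$ comes out as a constant.

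There is no real obstacle here; the only place where one needs to be attentive is getting the substitution and its symmetry $t\leftrightarrow ab/t$ correctly aligned with the parametrization, which is precisely what makes the square-root integrand collapse to a constant after a single application of geometric convexity to each factor. The refinement over the previous theorem is then visible because $\sqrt{f(a)f(b)}\le \tfrac{f(a)+f(b)}{2}$ by the AM--GM inequality.
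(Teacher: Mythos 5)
Your proposal is correct and follows essentially the same route as the paper: establish the pointwise chain $f(\sqrt{ab})\leq\sqrt{f(a^{\lambda}b^{1-\lambda})f(a^{1-\lambda}b^{\lambda})}\leq\sqrt{f(a)f(b)}$ from geometric convexity, then integrate over $\lambda\in[0,1]$ and identify the middle term with the stated integral via $t=a^{\lambda}b^{1-\lambda}$. The only cosmetic difference is that you spell out the change of variables and the two-factor multiplication explicitly, which the paper leaves implicit.
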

\begin{proof}
Since $f$ is geometrically convex function,  we can write
\begin{eqnarray*}
f(\sqrt{ab})&=&f\left(\sqrt{(a^{\lambda}b^{1-\lambda})(a^{1-\lambda}b^{\lambda})}\right)\\
&\leq& \sqrt{f(a^{\lambda}b^{1-\lambda})f(a^{1-\lambda}b^{\lambda})}\\
&\leq& \sqrt{f(a)^{\lambda}f(b)^{1-\lambda}f(a)^{1-\lambda}f(b)^{\lambda}}\\
&=&\sqrt{f(a)f(b)}.
\end{eqnarray*}
for all $\lambda\in[0,1]$.\\
So, we have
\begin{equation}\label{e11}
f(\sqrt{ab})\leq \sqrt{f(a^{\lambda}b^{1-\lambda})f(a^{1-\lambda}b^{\lambda})}\leq \sqrt{f(a)f(b)}.
\end{equation}
Integrate (\ref{e11}) over $[0,1]$, we have
$$f(\sqrt{ab})\leq \frac{1}{\ln b-\ln a}\int_{a}^{b}\frac{1}{t}\sqrt{f(t)f\left(\frac{ab}{t}\right)}dt\leq \sqrt{f(a)f(b)}.$$
\end{proof}

\begin{lemma}\label{htn}\cite[Page. 156]{nic}
Suppose that $I$ is a subinterval of $\mathbb{R}^{+}$ and $f:I\to (0,\infty)$ is a geometrically convex function. Then 
$$F=\log\circ f\circ \exp : \log (I) \to \mathbb{R}$$ is a convex function. Conversely, if $J$ is an interval  for which $\exp (J)$ is a subinterval of $\mathbb{R}^{+}$ and $F: J\to \mathbb{R}$ is a convex function, then $$f=\exp\circ F\circ \log: \exp (J) \to \mathbb{R}^{+}$$ is geometrically convex function.
\end{lemma}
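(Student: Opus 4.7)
The plan is to observe that the lemma is essentially a tautology: the maps $\log$ and $\exp$ convert between multiplicative and additive structure on both the domain and the codomain, so the defining inequality of geometric convexity for $f$ is literally the defining inequality of ordinary convexity for $F = \log\circ f\circ\exp$ after a change of variables $t = e^s$ on the argument side and $y = \log u$ on the value side. I would make this precise by direct substitution in both directions.

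For the forward implication, I would fix $x, y \in \log(I)$ and $\lambda \in [0,1]$, set $a = e^x$ and $b = e^y$ (both in $I$), and compute
\[
F\bigl(\lambda x + (1-\lambda)y\bigr) = \log f\bigl(e^{\lambda x}e^{(1-\lambda)y}\bigr) = \log f\bigl(a^{\lambda}b^{1-\lambda}\bigr).
\]
The key step is then to apply the geometric convexity of $f$ to the argument, followed by monotonicity of $\log$, to obtain the bound $\log\bigl(f(a)^{\lambda}f(b)^{1-\lambda}\bigr) = \lambda F(x) + (1-\lambda)F(y)$, which is exactly convexity of $F$.

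For the converse, I would run the same chain of identities in reverse: fix $a, b \in \exp(J)$ and $\lambda \in [0,1]$, set $x = \log a$ and $y = \log b$ (both in $J$), and use the identity
\[
f\bigl(a^{\lambda}b^{1-\lambda}\bigr) = \exp F\bigl(\lambda x + (1-\lambda)y\bigr).
\]
Convexity of $F$ combined with monotonicity of $\exp$ then bounds the right-hand side by $\exp\bigl(\lambda F(x) + (1-\lambda)F(y)\bigr) = f(a)^{\lambda}f(b)^{1-\lambda}$, which is geometric convexity of $f$.

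The main obstacle is purely bookkeeping: one must check that the compositions are well-defined (positivity of $f$ is needed so that $\log\circ f$ makes sense; the hypothesis $\exp(J)\subseteq\mathbb{R}^{+}$ ensures $\log$ is defined on the domain of $f$ in the converse) and that the weighted geometric mean $a^{\lambda}b^{1-\lambda}$ lies in $I$, which follows because $I$ is an interval in $\mathbb{R}^{+}$ and this quantity is sandwiched between $\min(a,b)$ and $\max(a,b)$. Beyond these routine checks, there is no real difficulty, since the exponential law $e^{\lambda x + (1-\lambda)y} = (e^{x})^{\lambda}(e^{y})^{1-\lambda}$ converts weighted arithmetic means into weighted geometric means and is the only fact used.
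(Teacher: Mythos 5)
Your argument is correct: the change of variables $a=e^{x}$, $b=e^{y}$ together with the identity $e^{\lambda x+(1-\lambda)y}=(e^{x})^{\lambda}(e^{y})^{1-\lambda}$ and monotonicity of $\log$ and $\exp$ gives both directions, and you rightly note the interval/domain checks needed to make the compositions well defined. The paper itself offers no proof — it cites this as a known result of Niculescu — and your direct verification is exactly the standard argument behind that citation, so nothing further is needed.
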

\begin{theorem}
Let $f$ be a geometrically convex function defined on $[a,b]$ such that $0<a<b$. Then, we have
\begin{eqnarray*}
f(\sqrt{ab})&\leq& \sqrt{\left(f(a^{\frac{3}{4}}b^{\frac{1}{4}})f(a^{\frac{1}{4}}b^{\frac{3}{4}})\right)}\\
&\leq& \exp\left(\frac{1}{\log b-\log a}\int_{a}^{b}\frac{\log f(t)}{t}dt\right)\\
&\leq&\sqrt{f(\sqrt{ab})}.\sqrt[4]{f(a)}.\sqrt[4]{f(b)}\\
&\leq& \sqrt{f(a)f(b)}
\end{eqnarray*}
for $a, b\in I$.
\end{theorem}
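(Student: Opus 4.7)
The plan is to reduce the whole chain to the classical (refined) Hermite--Hadamard inequality applied to the auxiliary convex function furnished by Lemma~\ref{htn}. Set $u=\log a$, $v=\log b$ and
$$F(s)=\log f(\e^{s}), \qquad s\in[u,v].$$
By Lemma~\ref{htn}, $F$ is convex on $[u,v]$. Evaluating $F$ at the relevant dyadic points gives the dictionary
$$F\!\left(\tfrac{u+v}{2}\right)=\log f(\sqrt{ab}),\quad F\!\left(\tfrac{3u+v}{4}\right)=\log f(a^{3/4}b^{1/4}),\quad F\!\left(\tfrac{u+3v}{4}\right)=\log f(a^{1/4}b^{3/4}),$$
together with $F(u)=\log f(a)$, $F(v)=\log f(b)$, and, via the change of variable $t=\e^{s}$,
$$\frac{1}{v-u}\int_{u}^{v}F(s)\,\d s=\frac{1}{\log b-\log a}\int_{a}^{b}\frac{\log f(t)}{t}\,\d t.$$

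I will then invoke the standard four-step refinement of the Hermite--Hadamard inequality (\ref{ro}) for the convex function $F$ on $[u,v]$, namely
$$F\!\left(\tfrac{u+v}{2}\right)\le \tfrac{1}{2}\bigl[F(\tfrac{3u+v}{4})+F(\tfrac{u+3v}{4})\bigr]\le \tfrac{1}{v-u}\!\int_{u}^{v}\!F\le \tfrac{1}{2}\bigl[F(\tfrac{u+v}{2})+\tfrac{F(u)+F(v)}{2}\bigr]\le \tfrac{F(u)+F(v)}{2}.$$
Substituting the dictionary above and exponentiating (which preserves order) yields exactly the announced five-term chain, because $\exp\bigl(\tfrac12(\log x+\log y)\bigr)=\sqrt{xy}$ and $\exp\bigl(\tfrac14\log x\bigr)=\sqrt[4]{x}$.

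The only genuine work is justifying the refined Hermite--Hadamard chain for convex $F$, which is not spelled out in the paper but is entirely routine. The first inequality is midpoint convexity applied at $\tfrac{u+v}{2}=\tfrac12(\tfrac{3u+v}{4}+\tfrac{u+3v}{4})$. The second and third follow by applying the left and right halves of (\ref{ro}), respectively, to $F$ on each of the subintervals $[u,\tfrac{u+v}{2}]$ and $[\tfrac{u+v}{2},v]$ and adding the two estimates; this uses $\tfrac{3u+v}{4}$ and $\tfrac{u+3v}{4}$ as the midpoints of those subintervals. The final inequality is just $F(\tfrac{u+v}{2})\le\tfrac{F(u)+F(v)}{2}$, again by convexity of $F$. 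I expect the bookkeeping in exponentiating the third inequality to be the only mildly tricky step, since one must verify that
$$\exp\!\left(\tfrac12\log f(\sqrt{ab})+\tfrac14\log f(a)+\tfrac14\log f(b)\right)=\sqrt{f(\sqrt{ab})}\cdot\sqrt[4]{f(a)}\cdot\sqrt[4]{f(b)}$$
matches the stated upper bound exactly; but this is immediate.
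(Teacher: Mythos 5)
Your proposal is correct and follows essentially the same route as the paper: transform via Lemma~\ref{htn} to the convex function $F=\log\circ f\circ\exp$ on $[\log a,\log b]$, apply the four-term refined Hermite--Hadamard chain, then exponentiate and change variables $t=\e^{s}$. The only difference is that the paper simply cites Remark~1.9.3 of \cite{nic2} for that refined chain, whereas you supply its standard bisection proof, which makes your version slightly more self-contained but does not change the argument.
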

\begin{proof}
Let $f:[a,b]\to \mathbb{R}$ be a geometrically convex function. So, by Lemma \ref{htn} we have 
$$F(x)=\log\circ f\circ \exp(x): [\log a, \log b]\to \mathbb{R}$$
is convex.\\
Then, by \cite[Remark 1.9.3]{nic2}
\begin{eqnarray*}
F\left(\frac{\log a+\log b}{2}\right)&\leq&\frac{1}{2}\left(F\left(\frac{3\log a+\log b}{4}\right)+F\left(\frac{\log a+3\log b}{4}\right)\right)\\
&\leq&\frac{1}{\log b-\log a}\int_{\log a}^{\log b} F(x)dx\\
&\leq&\frac{1}{2}\left(F\left(\frac{\log a+\log b}{2}\right)+\frac{F(\log a)+F(\log b)}{2}\right)\\
&\leq&\frac{F(\log a)+F(\log b)}{2}.
\end{eqnarray*}
By definition of $F$, we obtain
\begin{eqnarray*}
\log\circ f\circ \exp(\log\sqrt{ab})&\leq&\frac{1}{2}\left(\log\circ f\circ \exp\left(\log a^{\frac{3}{4}}b^{\frac{1}{4}}\right)+\log\circ f\circ \exp\left(\log a^{\frac{3}{4}}b^{\frac{1}{4}}\right)\right)\\
&\leq&\frac{1}{\log b-\log a}\int_{\log a}^{\log b} \log\circ f\circ \exp(x)dx\\
&\leq&\frac{1}{2}\left(\log\circ f\circ \exp\left(\log a^{\frac{1}{2}}b^{\frac{1}{2}}\right)+\frac{\log\circ f\circ \exp(\log a)+\log\circ f\circ \exp(\log b)}{2}\right)\\
&\leq&\frac{\log\circ f\circ \exp(\log a)+\log\circ f\circ \exp(\log b)}{2}.
\end{eqnarray*}
It follows that
\begin{eqnarray*}
\log f(\sqrt{ab})&\leq&\frac{1}{2}\left(\log f( a^{\frac{3}{4}}b^{\frac{1}{4}})+\log f (a^{\frac{3}{4}}b^{\frac{1}{4}})\right)\\
&\leq&\frac{1}{\log b-\log a}\int_{\log a}^{\log b} \log\circ f\circ \exp(x)dx\\
&\leq&\frac{1}{2}\left(\log  f \left(a^{\frac{1}{2}}b^{\frac{1}{2}}\right)+\frac{\log f (a)+\log f (b)}{2}\right)\\
&\leq&\frac{\log f (a)+\log f (b)}{2}.
\end{eqnarray*}
Since $\exp(x)$ is increasing, we have 
\begin{eqnarray*}
f(\sqrt{ab})&\leq& \sqrt{\left(f(a^{\frac{3}{4}}b^{\frac{1}{4}})f(a^{\frac{1}{4}}b^{\frac{3}{4}})\right)}\\
&\leq& \exp\left(\frac{1}{\log b-\log a}\int_{\log a}^{\log b}\log f(\exp (x))dx\right)\\
&\leq&\sqrt{f(\sqrt{ab})}.\sqrt[4]{f(a)}.\sqrt[4]{f(b)}\\
&\leq& \sqrt{f(a)f(b)}.
\end{eqnarray*}
Using change of variable $t=\exp(x)$ to obtain the desired result.
\end{proof}

The author of  \cite[p. 158]{nic} showed that every polynomial $P(x)$ with non-negative coefficients is a geometrically convex function on $[0,\infty)$. More generally, every real analytic function $f(x)=\sum_{n=0}^{\infty}c_{n}x^{n}$ with non-negative coefficients is geometrically convex function on $(0,R)$ where $R$ denotes the radius of convergence.  This gives some different examples of geometrically convex function. It is easy to show that $\exp(x)$ is geometrically convex function.

In this paper, we introduce the concept of operator geometrically convex functions and prove the Hermite-Hadamard type inequalities for these class of functions. These results lead us to obtain some inequalities for trace functional of operators.    
 
\section{\textbf{Inequalities for operator geometrically convex functions}}\label{gg}

In this section, we prove Hermite-Hadamard type inequality for operator geometrically convex function. 

In \cite{dra3} Dragomir investigated the operator version of the Hermite-Hadamard inequality for
operator convex functions. Let $f:I\to\mathbb{R}$ be an operator convex function on the interval $I$ then, for any self-adjoint operators $A$ and $B$ with spectra in $I$, the following inequalities holds
\begin{eqnarray}
f\left(\frac{A+B}{2}\right)&\leq&2\int_{\frac{1}{4}}^{\frac{3}{4}} f(tA+(1-t)B)dt\\
&\leq& \frac{1}{2}\left[f\left(\frac{3A+B}{4}\right)+f\left(\frac{A+3B}{4}\right)\right]\\
&\leq& \int_{0}^{1}f\left((1-t)A+tB\right)dt\nonumber\\
&\leq&\frac{1}{2}\left[f\left(\frac{A+B}{2}\right)+\frac{f(A)+f(B)}{2}\right]\\
&\leq& \frac{f(A)+f(B)}{2}, \label{kdv}
\end{eqnarray} 
for the first inequality in above, see \cite{taghavi}.

To give operator geometrically convex function definition, we need following lemmas. 
\begin{lemma}\cite[Lemma 3]{nag}\label{com}
Let $A$ and $B$ be two operators in $\mathcal{A}^{+}$, and $f$ a continuous function on $\Sp(A)$. Then, $AB=BA$ implies that $f(A)B=Bf(A)$.
\end{lemma}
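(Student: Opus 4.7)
The plan is to prove the statement by the standard polynomial-approximation argument underlying the continuous functional calculus. First I would verify the conclusion for polynomials in $A$, then extend to arbitrary continuous $f$ by uniform approximation on the (compact) spectrum $\Sp(A)$, using the isometric property $\|\Phi(f)\|=\|f\|_\infty$ of the Gelfand map recorded in the preliminaries.

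More concretely, the first step is a short induction on $n$ showing that $AB=BA$ implies $A^{n}B=BA^{n}$ for every $n\geq 0$: the cases $n=0,1$ are immediate, and the inductive step reads
\begin{equation*}
A^{n+1}B=A^{n}(AB)=A^{n}(BA)=(A^{n}B)A=(BA^{n})A=BA^{n+1}.
\end{equation*}
By linearity this gives $p(A)B=Bp(A)$ for every complex polynomial $p$. Since $A$ is self-adjoint its spectrum $\Sp(A)$ is a compact subset of $\mathbb{R}$, so the Weierstrass approximation theorem yields a sequence of polynomials $(p_{k})$ with $\|p_{k}-f\|_{\infty}\to 0$ on $\Sp(A)$.

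The second step transports this uniform convergence to operator-norm convergence. By the property $\|\Phi(g)\|=\|g\|_{\infty}$ of the Gelfand map, applied to $g=p_{k}-f$, we obtain $\|p_{k}(A)-f(A)\|\to 0$. Because left- and right-multiplication by the bounded operator $B$ are continuous in the operator norm, $p_{k}(A)B\to f(A)B$ and $Bp_{k}(A)\to Bf(A)$; but the two sides are equal for every $k$ by the first step, so $f(A)B=Bf(A)$.

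I do not anticipate any real obstacle: the entire argument is a routine application of the functional calculus, and the only subtlety is making sure to invoke the norm identity $\|\Phi(g)\|=\|g\|_{\infty}$ so that polynomial approximation in sup-norm upgrades to convergence of the operators $p_{k}(A)$. (One could alternatively absorb the whole argument into the observation that the commutant $\{B\}'=\{X\in\mathcal{A}:XB=BX\}$ is norm-closed and contains every polynomial in $A$, hence contains the closure $C^{*}(A,1_{H})$, and in particular $f(A)$.)
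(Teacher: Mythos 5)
Your proof is correct. Note that the paper itself does not prove this lemma at all; it simply cites it from Nagisa--Ueda--Wada, so there is no in-paper argument to compare against. Your polynomial-approximation argument (induction giving $A^{n}B=BA^{n}$, hence $p(A)B=Bp(A)$, then Weierstrass approximation on the compact set $\Sp(A)$ combined with the isometry $\|\Phi(g)\|=\|g\|_{\infty}$ and norm-continuity of multiplication by $B$) is the standard and complete proof, and your closing observation that the norm-closed commutant of $B$ contains $C^{*}(A,1_{H})$ is an equally valid one-line packaging of the same idea.
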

\noindent Since $f(t)=t^{\lambda}$ is continuous function for $\lambda\in [0,1]$  and $\mathcal{A}$ is a commutative $C^{*}$-algebra, we have
$A^{\lambda}B=BA^{\lambda}$. Moreover, by applying above lemma for $f(t)=t^{1-\lambda}$ again, we have
$A^{\lambda}B^{1-\lambda}=B^{1-\lambda}A^{\lambda}$, for  operators $A$ and $B$ in $\mathcal{A}^{+}$. It means $A^{\lambda}$ and $B^{1-\lambda}$ commute together whenever $A$ and $B$ commute.
\begin{lemma}\label{gconvex}
Let $A$ and $B$ be two operators in $\mathcal{A}^{+}$. Then $$\{A^{\lambda}B^{1-\lambda} : 0\leq \lambda\leq 1\}$$ is convex.
\end{lemma}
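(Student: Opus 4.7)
The claim is that $\mathcal{G}(A,B) := \{A^{\lambda} B^{1-\lambda} : \lambda \in [0,1]\}$ is a convex subset of $\mathcal{A}^{+}$. Explicitly, given $\lambda_1,\lambda_2 \in [0,1]$ and $t\in[0,1]$, one must exhibit a $\mu = \mu(\lambda_1,\lambda_2,t)\in[0,1]$ satisfying
$$t\, A^{\lambda_1} B^{1-\lambda_1} + (1-t)\, A^{\lambda_2} B^{1-\lambda_2} \;=\; A^{\mu} B^{1-\mu}.$$
The plan is to transport this identity to a pointwise scalar equation via the Gelfand representation, solve for $\mu$ pointwise, and then verify that the resulting $\mu$ is actually the same scalar at every point.

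First I would invoke commutativity: since $\mathcal{A}$ is a commutative $C^{*}$-algebra, iterating Lemma \ref{com} shows that $A^{\lambda}$ and $B^{1-\lambda}$ commute, so $A^{\lambda}B^{1-\lambda}$ is a well-defined positive element. Passing through the Gelfand isomorphism identifies $\mathcal{A}$ with some $C(X)$ and replaces $A,B$ by strictly positive continuous functions $a,b$ on $X$. The identity then becomes the pointwise requirement
$$t\, a(\chi)^{\lambda_1} b(\chi)^{1-\lambda_1} + (1-t)\, a(\chi)^{\lambda_2} b(\chi)^{1-\lambda_2} \;=\; a(\chi)^{\mu} b(\chi)^{1-\mu} \quad \text{for every } \chi \in X.$$
At each fixed $\chi$ the map $\mu \mapsto a(\chi)^{\mu} b(\chi)^{1-\mu}$ is continuous and monotone on $[0,1]$ with image $[\min(a,b),\max(a,b)]$; since the left-hand side is a convex combination of two values in that same interval, the intermediate value theorem supplies a unique $\mu(\chi)\in[0,1]$ solving the equation.

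The delicate step, and the one I expect to be the main obstacle, is arguing that $\mu(\chi)$ is independent of $\chi$, so that a single scalar $\mu$ works simultaneously across all of $X$. Writing $r(\chi) = \log(a(\chi)/b(\chi))$, the pointwise $\mu$ is determined implicitly by
$$\mu\, r \;=\; \log\!\bigl(t\, e^{\lambda_1 r} + (1-t)\, e^{\lambda_2 r}\bigr),$$
whose right-hand side depends nonlinearly on $r$ in general. Unless the spectrum of $\log A - \log B$ collapses to a single point (i.e.\ $A$ and $B$ differ by a scalar factor in $\mathcal{A}$), the pointwise solution $\mu(\chi)$ must vary with $\chi$ and cannot be assembled into a single scalar. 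I therefore expect the proof either to exploit such an implicit hypothesis, or to adopt a reading of ``convex'' that is weaker than the standard affine-combination one; the natural candidate is closure under the multiplicative operation $(C_1,C_2)\mapsto C_1^{t} C_2^{1-t}$, which does follow at once from commutativity with $\mu = t\lambda_1 + (1-t)\lambda_2$, but is not affine convexity in the usual sense.
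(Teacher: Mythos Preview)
Your diagnosis is correct, and the obstruction you isolate is genuine: the lemma, read as affine convexity of the set in $\mathcal{A}^{+}$, is false in general. Your implicit-equation analysis shows exactly why. A concrete instance: take $\mathcal{A}=C[0,1]$, $A\equiv 1$, $B(x)=e^{x}$; then $A^{\lambda}B^{1-\lambda}(x)=e^{(1-\lambda)x}$, so the set is $\{e^{\mu x}:\mu\in[0,1]\}$. The midpoint $\tfrac12(A+B)=\tfrac12(1+e^{x})$ is not of the form $e^{\mu x}$ for any fixed $\mu$, since equality at $x=1$ forces $\mu=\log\!\bigl(\tfrac{1+e}{2}\bigr)$ while equality at $x=\tfrac12$ forces a different value. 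This is precisely your point that the pointwise solution $\mu(\chi)$ varies with $\chi$ unless $\log A-\log B$ is scalar.

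For comparison, the paper's argument runs as follows: the segment $\{\lambda\log A+(1-\lambda)\log B:\lambda\in[0,1]\}$ is convex; then, invoking the slogan ``$e^{f}$ is convex when $f$ is convex'' together with commutativity, it rewrites $e^{\lambda\log A+(1-\lambda)\log B}=A^{\lambda}B^{1-\lambda}$ and declares the image convex. The gap is that the slogan is a statement about convex \emph{functions}, not about images of convex \emph{sets} under $\exp$; the exponential of a line segment in $\mathcal{A}$ is a curve, not a segment, and there is no reason for it to be affinely convex. So your hesitation is warranted: the only version that actually holds is the multiplicative closure $(C_{1},C_{2})\mapsto C_{1}^{t}C_{2}^{1-t}$ with $\mu=t\lambda_{1}+(1-t)\lambda_{2}$, which follows immediately from commutativity but is not the standard notion of convexity.
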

\begin{proof}
We know that $\{\lambda A+(1-\lambda)B : 0\leq \lambda \leq1\}$ is convex for arbitrary operator $A$ and $B$. So, $\{\lambda \log A+(1-\lambda)\log B : 0\leq \lambda \leq 1\}$ is convex. Since $A$ and $B$ are commutative and knowing that $e^{f}$ is convex when $f$ is convex, we have
\begin{eqnarray*}
e^{\left(\lambda \log A+(1-\lambda) \log B\right)}&=&e^{\lambda \log A}e^{(1-\lambda)\log B}\\
&=&A^{\lambda}B^{1-\lambda}.
\end{eqnarray*}
So, $A^{\lambda}B^{1-\lambda}$ is convex for $0\leq\lambda\leq1$.
\end{proof}

\begin{lemma}\cite[Theorem 5.3]{zhu}
Let $A$ and $B$ be in a Banach algebra such that $AB=BA$. Then 
$$\Sp(AB)\subset\Sp (A)\Sp (B).$$
 \end{lemma}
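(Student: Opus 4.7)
The plan is to reduce to the commutative case, where Gelfand theory makes the factorization of spectral values transparent. Specifically, enlarge the commuting pair $\{A,B\}$ to a maximal commutative closed subalgebra $\mathcal{B}$ of the ambient Banach algebra containing $1$, $A$, and $B$; such a $\mathcal{B}$ exists by a standard Zorn's lemma argument, and this is exactly the step that uses $AB=BA$.

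The first, essentially technical, step is to verify that spectra computed inside $\mathcal{B}$ agree with spectra computed in the ambient algebra for every $x\in\mathcal{B}$. The reason is that if $x\in\mathcal{B}$ is invertible in the ambient algebra, then $x^{-1}$ commutes with everything that commutes with $x$ (from $xy=yx$ one gets $x^{-1}y=yx^{-1}$), so in particular $x^{-1}$ commutes with every element of $\mathcal{B}$; maximality then forces $x^{-1}\in\mathcal{B}$. Consequently $\Sp_{\mathcal{B}}(x)=\Sp(x)$ for each $x\in\mathcal{B}$, and in particular for $x=A$, $x=B$, and $x=AB$.

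The second step is Gelfand's theorem for commutative unital Banach algebras: given $\lambda\in\Sp_{\mathcal{B}}(AB)$, there exists a multiplicative linear functional (character) $\chi:\mathcal{B}\to\mathbb{C}$ with $\chi(AB)=\lambda$. Multiplicativity of $\chi$ gives
$$\lambda=\chi(AB)=\chi(A)\chi(B),$$
and because $\chi(x)\in\Sp_{\mathcal{B}}(x)$ for every $x\in\mathcal{B}$, we obtain $\chi(A)\in\Sp(A)$ and $\chi(B)\in\Sp(B)$, so $\lambda\in\Sp(A)\Sp(B)$.

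Chaining the two steps, $\Sp(AB)\subseteq\Sp_{\mathcal{B}}(AB)\subseteq\Sp(A)\Sp(B)$, which is what we wanted. The only genuine obstacle is the first step, namely the independence of the spectrum on the choice of ambient algebra within a maximal commutative subalgebra; once that is in place, the factorization is an immediate consequence of the multiplicativity of characters.
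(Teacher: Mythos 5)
Your proposal is correct: the paper itself offers no proof of this lemma (it is quoted from Zhu, Theorem 5.3), and your argument --- enlarging $\{1,A,B\}$ to a maximal commutative (automatically closed and inverse-closed) subalgebra $\mathcal{B}$, noting $\Sp_{\mathcal{B}}(x)=\Sp(x)$ for $x\in\mathcal{B}$, and then factoring $\lambda=\chi(AB)=\chi(A)\chi(B)$ via a character --- is exactly the standard proof of the cited result, with all the delicate points (commutation of $x^{-1}$, maximality, $\chi(x)\in\Sp_{\mathcal{B}}(x)$) handled correctly.
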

 Let $A$ and $B$ be two positive operators in $\mathcal{A}$ with spectra in $I$. Now, Lemma \ref{com} and functional calculus \cite[Theorem 10.3 (c)]{zhu} imply that
 \begin{equation*}
 \Sp(A^{\lambda}B^{1-\lambda})\subset \Sp(A^{\lambda})\Sp(B^{1-\lambda})=\Sp(A)^{\lambda}\Sp(B)^{1-\lambda}\subseteq I
 \end{equation*}
 for $0\leq\lambda\leq1$.
\begin{definition}
A continuous function $f:I\subseteq \mathbb{R}^{+}\to\mathbb{R}^{+}$ is said to be  operator geometrically convex  if 
$$f(A^{\lambda}B^{1-\lambda})\leq f(A)^{\lambda}f(B)^{1-\lambda}$$
for $A, B\in \mathcal{A}^{+}$ such that $\Sp (A)$, $\Sp(B)\subseteq I$.
\end{definition}
Now, we are ready to prove Hermite-Hadamard type inequality for operator geometrically convex functions.
\begin{theorem}\label{thm}
Let $f$ be an operator geometrically convex function. Then, we have
\begin{equation}\label{mr}
\log f(\sqrt{AB})\leq \int_{0}^{1}\log f(A^{t}B^{1-t})dt\leq \log \sqrt{f(A)f(B)}
\end{equation}
for $0\leq t\leq 1$ and $A,B\in \mathcal{A}^{+}$ such that $\Sp (A), \Sp(B)\subseteq I$.
\end{theorem}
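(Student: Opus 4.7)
My plan is to mimic the scalar proof that appeared just above (the one establishing $f(\sqrt{ab})\leq \sqrt{f(a^\lambda b^{1-\lambda})f(a^{1-\lambda}b^\lambda)}\leq \sqrt{f(a)f(b)}$), transplant it to the operator setting by exploiting the fact that everything takes place in the commutative $C^{*}$-algebra $\mathcal{A}$, and then take logarithms before integrating. Throughout, I will use that, since $\mathcal{A}$ is commutative, Lemma \ref{com} (and its corollary noted in the paper) gives $A^\alpha B^\beta = B^\beta A^\alpha$, and $f(A)^\alpha f(B)^\beta = f(B)^\beta f(A)^\alpha$, for all real $\alpha,\beta$; moreover $(XY)^{1/2}=X^{1/2}Y^{1/2}$ for commuting positive $X,Y$, and $\log$ is operator monotone, so operator inequalities between strictly positive elements are preserved under $\log$.

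The first step is to observe the commutative identity
$$\sqrt{AB}=\sqrt{(A^{t}B^{1-t})(A^{1-t}B^{t})},$$
which holds because $A^{t}B^{1-t}$ and $A^{1-t}B^{t}$ commute and their product equals $AB$. Applying the definition of operator geometric convexity with exponent $1/2$ to the commuting pair $X=A^{t}B^{1-t}$, $Y=A^{1-t}B^{t}$ yields
$$f(\sqrt{AB})\leq f(A^{t}B^{1-t})^{1/2}\,f(A^{1-t}B^{t})^{1/2}.$$
Next, applying operator geometric convexity again to each factor gives $f(A^{t}B^{1-t})\leq f(A)^{t}f(B)^{1-t}$ and $f(A^{1-t}B^{t})\leq f(A)^{1-t}f(B)^{t}$, so that
$$f(A^{t}B^{1-t})^{1/2}\,f(A^{1-t}B^{t})^{1/2}\leq \bigl(f(A)^{t}f(B)^{1-t}f(A)^{1-t}f(B)^{t}\bigr)^{1/2}=\sqrt{f(A)f(B)}.$$
Stringing these together gives the two-sided operator inequality
$$f(\sqrt{AB})\leq \sqrt{f(A^{t}B^{1-t})f(A^{1-t}B^{t})}\leq \sqrt{f(A)f(B)}.$$

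For the third step I would take $\log$ of this chain (using operator monotonicity of $\log$ and commutativity, which turns the square root into a factor of $1/2$), obtaining
$$\log f(\sqrt{AB})\leq \tfrac{1}{2}\bigl(\log f(A^{t}B^{1-t})+\log f(A^{1-t}B^{t})\bigr)\leq \tfrac{1}{2}\bigl(\log f(A)+\log f(B)\bigr).$$
Finally, integrating over $t\in[0,1]$ and performing the substitution $s=1-t$ in the second integral of the middle term collapses the two pieces into a single integral $\int_{0}^{1}\log f(A^{t}B^{1-t})\,dt$, which yields \eqref{mr}.

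The only genuine subtlety I foresee is the justification of the two ``taking square roots'' and ``taking logs'' steps at the operator level: one needs that for a commuting pair of strictly positive operators in $\mathcal{A}$ one really has $(XY)^{1/2}=X^{1/2}Y^{1/2}$, and that operator monotonicity of $\log$ on $(0,\infty)$ transfers the inequalities. Both facts are immediate from the continuous functional calculus once one is inside the commutative $C^{*}$-algebra $\mathcal{A}$, so the ``hard part'' is really only bookkeeping; the argument itself is a direct operator analogue of the scalar proof of the refined Hermite-Hadamard inequality given earlier in the paper.
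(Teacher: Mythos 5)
Your proposal is correct and follows essentially the same route as the paper: substitute the pair $A^{t}B^{1-t}$, $A^{1-t}B^{t}$ into the definition, use operator monotonicity of $\log$ together with commutativity of $\mathcal{A}$, integrate over $[0,1]$, and exploit the symmetry $t\mapsto 1-t$. The only cosmetic difference is on the right-hand inequality, where the paper integrates $\log f(A^{t}B^{1-t})\leq t\log f(A)+(1-t)\log f(B)$ directly while you symmetrize first; both give $\log\sqrt{f(A)f(B)}$.
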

\begin{proof}
Since $f$ is operator geometrically convex function, we have
$f(\sqrt{AB})\leq \sqrt{f(A)f(B)}$. Let replace $A$ and $B$ by $A^{t}B^{1-t}$ and $A^{1-t}B^{t}$ respectively, we obtain
\begin{equation}
f(\sqrt{AB})\leq\sqrt{f(A^{t}B^{1-t})f(A^{1-t}B^{t})}.
\end{equation}
It is well-known that $\log t$ is operator monotone function on $(0,\infty)$ (see \cite{zhan}), i.e., $\log t$ is operator monotone function if $\log A\leq \log B$ when $A\leq B$. So, by above inequality, we have 
\begin{eqnarray*}
\log f(\sqrt{AB})&\leq& \log \sqrt{f(A^{t}B^{1-t})f(A^{1-t}B^{t})}\\
&=&\frac{1}{2}\log \left(f(A^{t}B^{1-t})f(A^{1-t}B^{t})\right)\\
&=&\frac{1}{2}\left(\log f(A^{t}B^{1-t})+\log f(A^{1-t}B^{t})\right).
\end{eqnarray*}
Therefore, 
$$\log f(\sqrt{AB})\leq \frac{1}{2}\left(\log f(A^{t}B^{1-t})+\log f(A^{1-t}B^{t})\right).$$
Integrate above inequality over $[0,1]$, we can write the following
\begin{eqnarray}
\int_{0}^{1}\log f(\sqrt{AB})dt&\leq& \frac{1}{2}\left(\int_{0}^{1}\log f(A^{t}B^{1-t})dt+\int_{0}^{1}\log f(A^{1-t}B^{t})dt\right)\nonumber\\
&=&\int_{0}^{1}\log f(A^{t}B^{1-t})dt.\label{jv1}
\end{eqnarray}
The last above equality  follows by knowing that 
$$\int_{0}^{1}\log f(A^{t}B^{1-t})dt=\int_{0}^{1}\log f(A^{1-t}B^{t})dt.$$
Hence, from (\ref{jv1}), we have
$$\log f(\sqrt{AB})\leq \int_{0}^{1}\log f(A^{t}B^{1-t})dt.$$
This proved left inequality of (\ref{mr}). \\
On the other hand, we have $f(A^{t}B^{1-t})\leq f(A)^{t}f(B)^{1-t}$. It follows that 
\begin{eqnarray*}
\log f(A^{t}B^{1-t})&\leq& \log f(A)^{t}f(B)^{1-t}\\
&=& \log f(A)^{t}+\log f(B)^{1-t}\\
&=&t \log f(A)+(1-t)\log f(B).
\end{eqnarray*}
So, 
\begin{equation}\label{iv}
\log f(A^{t}B^{1-t})\leq t \log f(A)+(1-t)\log f(B).
\end{equation}
Now, integrate of (\ref{iv}) on $[0,1]$, we have
\begin{eqnarray*}
\int_{0}^{1}\log f(A^{t}B^{1-t})dt &\leq& \int_{0}^{1} t \log f(A)dt+\int_{0}^{1}(1-t)\log f(B)dt\\
&=&\log f(A)\int_{0}^{1}tdt+\log f(B)\int_{0}^{1}(1-t) dt\\
&=&\frac{1}{2}\left(\log f(A)+\log f(B)\right)\\
&=&\log \sqrt{f(A)f(B)}.
\end{eqnarray*}
This completes the proof.
\end{proof}

We should mention, when $f$ is operator geometrically convex function, then we have
\begin{eqnarray*}
f(\sqrt{AB})&=&f(\sqrt{A^{t}B^{1-t}A^{1-t}B^{t}})\\
&\leq&\sqrt{f(A^{t}B^{1-t})f(A^{1-t}B^{t})}\\
&\leq&\sqrt{f(A)^{t}f(B)^{1-t}f(A)^{1-t}f(B)^{t}}\\
&=&\sqrt{f(A)f(B)}.
\end{eqnarray*}
So, we have
$$f(\sqrt{AB})\leq\sqrt{f(A^{t}B^{1-t})f(A^{1-t}B^{t})}\leq\sqrt{f(A)f(B)}.$$
Integrate above inequality over $[0,1]$, we obtain

$$f(\sqrt{AB})\leq\int_{0}^{1}\sqrt{f(A^{t}B^{1-t})f(A^{1-t}B^{t})} dt\leq\sqrt{f(A)f(B)},$$
for $0\leq t\leq 1$ and $A,B\in \mathcal{A}^{+}$ such that $\Sp (A), \Sp(B)\subseteq I$.\\

Let $A, B\in \mathcal{A}$ and $A\leq B$, by continuous functional calculus \cite[Theorem 10.3 (b)]{zhu}, we can easily obtain $\exp(A)\leq \exp(B)$. This means $\exp(t)$ is operator monotone on $[0,\infty)$ for $A,B\in\mathcal{A}$.

 On the other hand, like the classical case, the arithmetic-geometric mean inequality
holds for operators as following
\begin{equation}\label{ag}
A^{\frac{1}{2}}\left(A^{-\frac{1}{2}}BA^{-\frac{1}{2}}\right)^{\nu}A^{\frac{1}{2}}\leq (1-\nu) A+\nu B, \ \ \nu\in[0,1]
\end{equation}
with respect to operator order for positive non-commutative operator in $B(H)$. Whenever, $A$ and $B$ commute together, then inequality (\ref{ag}) reduces to 
\begin{equation}
A^{1-\nu}B^{\nu}\leq (1-\nu)A+\nu B, \ \ \nu\in[0,1].
\end{equation}
Since $\exp(t)$ is an operator monotone function, by above inequality we have 
\begin{eqnarray*}
\exp\left({A^{1-\nu}B^{\nu}}\right)&\leq & \exp\left((1-\nu)A+\nu B\right)\\
&=& \exp((1-\nu)A)\exp(\nu B)\\
&=& \exp(A)^{1-\nu}\exp(B)^{\nu},
\end{eqnarray*}
for $A, B\in \mathcal{A}^{+}$ and $\nu\in [0,1]$. So, in this case $\exp(t)$ is an operator geometrically convex function on $[0,\infty)$.

Let replace $f$ in Theorem \ref{thm} by $\exp(t)$ as an operator geometrically convex function, we have

\begin{eqnarray*}
\log \exp(\sqrt{AB})\leq \int_{0}^{1}\log \exp(A^{t}B^{1-t})dt&\leq& \log \sqrt{\exp(A)\exp(B)}\\
&=&\frac{1}{2}\log \left(\exp(A)\exp(B)\right)\\
&=&\frac{1}{2}\left(\log \exp(A)+\log \exp(B)\right).
\end{eqnarray*}
So, 
\begin{equation}\label{mv1}
\sqrt{AB}\leq \int_{0}^{1}A^{t}B^{1-t}dt\leq\frac{A+B}{2},
\end{equation}
for $A, B\in \mathcal{A}^{+}$.

Here, we mention some remarks for operator geometrically convex functions.
\begin{remark}
$f(x)=\|x\|$ is geometrically convex function for usual operator norms since the following hold
 \begin{equation*}
 f(A^{\alpha}B^{1-\alpha})=\|A^{\alpha}B^{1-\alpha} \|\leq \|A\|^{\alpha}\|B\|^{1-\alpha}=f(A)^{\alpha}f(B)^{1-\alpha}.
 \end{equation*}
Above inequality is a special case of McIntosh inequality.

\end{remark}
\begin{remark}
If $f(t)$ is an operator geometrically convex function, then so is $g(t)=tf(t)$ 
\begin{eqnarray*}
g(A^{\alpha}B^{1-\alpha})&=&A^{\alpha}B^{1-\alpha}f(A^{\alpha}B^{1-\alpha})\\
&\leq&A^{\alpha}B^{1-\alpha}f(A)^{\alpha}f(B)^{1-\alpha}\\
&\leq&A^{\alpha}f(A)^{\alpha}B^{1-\alpha}f(B)^{1-\alpha}\\
&=&g(A)^{\alpha}g(B)^{1-\alpha}
\end{eqnarray*}
for $\alpha\in[0,1]$ and $A, B\in \mathcal{A}^{+}$.
\end{remark}

\begin{remark}
Operator geometrically convex functions is an algebra with some complication of operators spectra. To see this we make use of the following inequality
\begin{equation}\label{shak}
A^{\alpha}B^{1-\alpha}+C^{\alpha}D^{1-\alpha}\leq (A+C)^{\alpha}+(B+D)^{1-\alpha}
\end{equation}
for $A, B, C, D\in \mathcal{A}^{+}$.

Let $f$ and $g$ be operator geometrically convex functions.\\
First, we prove that $f+g$ is an operator geometrically convex function
\begin{eqnarray*}
(f+g)(A^{\alpha}B^{1-\alpha})&=& f(A^{\alpha}B^{1-\alpha})+g(A^{\alpha}B^{1-\alpha})\\
&\leq& f(A)^{\alpha}f(B)^{1-\alpha}+g(A)^{\alpha}g(B)^{1-\alpha}\\
&\leq& \left(f(A)+g(A)\right)^{\alpha}+\left(f(B)+g(B)\right)^{1-\alpha}\\
&=& \left((f+g)(A)\right)^{\alpha}+\left((f+g)(B)\right)^{1-\alpha}
\end{eqnarray*}
for $A, B\in \mathcal{A}^{+}$. In the last inequality above we applied (\ref{shak}).

Second, we show that $mf$ is an operator geometrically convex function for a scalar $m$
\begin{eqnarray*}
(mf)(A^{\alpha}B^{1-\alpha})&\leq& mf(A)^{\alpha}f(B)^{1-\alpha}\\
&=&(mf(A))^{\alpha}(mf(B))^{1-\alpha}
\end{eqnarray*}
for $A, B\in \mathcal{A}^{+}$.

Third,  $h=fg$ is an operator geometrically convex function
\begin{eqnarray*}
h(A^{\alpha}B^{1-\alpha})&=&f(A^{\alpha}B^{1-\alpha})g(A^{\alpha}B^{1-\alpha})\\
&\leq& f(A)^{\alpha}f(B)^{1-\alpha}g(A)^{\alpha}g(B)^{1-\alpha}\\
&=& f(A)^{\alpha}g(A)^{\alpha}f(B)^{1-\alpha}g(B)^{1-\alpha}\\
&=&h(A)^{\alpha}h(B)^{1-\alpha}
\end{eqnarray*}
for $A, B\in \mathcal{A}^{+}$.
\end{remark}

Let $\{e_{i}\}_{i\in I}$ be an orthonormal basis of $H$, we say that $A\in B(H)$ is \textit{trace class} if 
\begin{equation}\label{dd7}
\|A\|_{1}:=\sum_{i\in I}\langle |A|e_{i},e_{i}\rangle <\infty.
\end{equation}
The definition of $\|A\|_{1}$ does not depend on the choice of the orthonormal basis $\{e_{i}\}_{i\in I}$. We denote by $B_{1}(H)$ the set of trace class operators in $B(H)$.

We define the \textit{trace} of a trace class operator $A\in B_{1}(H)$ to be 
\begin{equation}\label{dd9}
\Tr(A):=\sum_{i\in I}\langle Ae_{i}, e_{i}\rangle,
\end{equation}
where $\{e_{i}\}_{i\in I}$ an orthonormal basis of $H$. 

Note that this coincides with the usual definition of the trace if $H$ is finite-dimensional. We observe that the series (\ref{dd9}) converges absolutely.\\

The following result collects some properties of the trace:

\begin{theorem}
We have

(i) If $A\in B_{1}(H)$ then $A^{*}\in B_{1}(H)$ and 
\begin{equation}\label{dd10}
\Tr(A^{*})=\overline{\Tr(A)};
\end{equation}

(ii) If $A\in B_{1}(H)$ and $T\in B(H)$, then $AT, TA\in B_{1}(H)$ and
\begin{equation}\label{dd11}
\Tr(AT)=\Tr(TA) \ \ \ and \ \ \ |\Tr(AT)|\leq \|A\|_{1}\|T\|;
\end{equation}

(iii) $\Tr(\cdot)$ is a bounded linear functional on $B_{1}(H)$ with $\|\Tr\|=1$;

(iv) If $A, B\in B_{1}(H)$ then  $\Tr(AB)=\Tr(BA)$.

\end{theorem}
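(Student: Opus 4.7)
The plan is to reduce everything to the polar decomposition $A=U|A|$ with $U$ a partial isometry and $|A|=(A^{*}A)^{1/2}$, together with the factorization of a trace class operator as a product of two Hilbert--Schmidt operators, $A=(U|A|^{1/2})\cdot|A|^{1/2}$. Once the Hilbert--Schmidt class has been set up as a two-sided $\ast$-ideal of $B(H)$ equipped with its own inner product, all four assertions fall out with little fuss.

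For (i), I would observe that $|A|^{2}=A^{*}A$ and $|A^{*}|^{2}=AA^{*}$ share the same non-zero spectrum, hence the singular values of $A$ and $A^{*}$ coincide; diagonalizing $|A|$ and $|A^{*}|$ on their respective eigenbases and comparing partial sums yields $\|A^{*}\|_{1}=\|A\|_{1}$, independence of the basis in (\ref{dd7}) being checked along the way by a standard monotone convergence argument. The identity $\Tr(A^{*})=\overline{\Tr(A)}$ then follows termwise from $\langle A^{*}e_{i},e_{i}\rangle=\overline{\langle Ae_{i},e_{i}\rangle}$, whose absolute convergence is guaranteed by $|\langle Ae_{i},e_{i}\rangle|\leq \langle|A|e_{i},e_{i}\rangle$, a consequence of Cauchy--Schwarz applied to the Hilbert--Schmidt factorization.

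For (ii), the key point is that $|A|^{1/2}$ is Hilbert--Schmidt with $\||A|^{1/2}\|_{\mathrm{HS}}^{2}=\|A\|_{1}$, so both $AT$ and $TA$ are products of two Hilbert--Schmidt operators and hence trace class, while the bound $|\Tr(AT)|\leq \|A\|_{1}\|T\|$ comes from Cauchy--Schwarz in the HS inner product. For (iv), writing $A=A_{1}A_{2}$ with each $A_{i}$ Hilbert--Schmidt and applying Parseval in the HS inner product gives $\Tr(A_{1}A_{2})=\Tr(A_{2}A_{1})$; substituting $A=A_{1}A_{2}$ and the analogous factorization of $B$ then delivers $\Tr(AB)=\Tr(BA)$. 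Part (iii) is the easiest: linearity is inherited from the inner product, boundedness $\|\Tr\|\leq 1$ follows from $|\Tr(A)|\leq \|A\|_{1}$, and the converse inequality is witnessed by any rank-one orthogonal projection.

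The main obstacle is the cyclic property embedded in (ii) and (iv). A naive attempt to swap factors inside $\sum_{i}\langle ATe_{i},e_{i}\rangle$ fails because neither summand is absolutely convergent after the swap, so one genuinely needs the Hilbert--Schmidt intermediate layer and the Parseval identity in the HS inner product; verifying basis-independence rigorously there is the real analytical work of the theorem, and it also justifies retroactively that the definition (\ref{dd9}) does not depend on the orthonormal basis chosen.
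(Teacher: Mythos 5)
The paper itself offers no proof of this theorem: it is stated as a collection of standard facts about the trace and the reader is referred to Simon's \emph{Trace Ideals and Their Applications}, so there is no in-paper argument to compare yours against. Your outline is the standard route found in that reference (polar decomposition, factorization of a trace class operator into two Hilbert--Schmidt factors, and Parseval in the Hilbert--Schmidt inner product for the cyclicity), and parts (ii), (iii) and (iv) are fine as sketched.

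One step in (i) is wrong as stated: you justify the absolute convergence of $\sum_i\langle Ae_i,e_i\rangle$ by the pointwise bound $|\langle Ae_{i},e_{i}\rangle|\leq \langle|A|e_{i},e_{i}\rangle$, and this inequality is false in general. For $A=\left(\begin{smallmatrix}0&1\\0&0\end{smallmatrix}\right)$ one has $|A|=\left(\begin{smallmatrix}0&0\\0&1\end{smallmatrix}\right)$, and with the unit vector $e=(\sqrt{3}/2,\,1/2)$ the left-hand side equals $\sqrt{3}/4$ while the right-hand side equals $1/4$. What the Hilbert--Schmidt factorization $A=\bigl(U|A|^{1/2}\bigr)\,|A|^{1/2}$ actually gives is the mixed bound $|\langle Ae_i,e_i\rangle|=|\langle |A|^{1/2}e_i,\ |A|^{1/2}U^{*}e_i\rangle|\leq \||A|^{1/2}e_i\|\,\||A|^{1/2}U^{*}e_i\|$, and absolute convergence then follows from Cauchy--Schwarz applied to the sum: $\sum_i|\langle Ae_i,e_i\rangle|\leq \bigl(\Tr|A|\bigr)^{1/2}\bigl(\Tr (U|A|U^{*})\bigr)^{1/2}\leq\|A\|_{1}$, since $\Tr(U|A|U^{*})=\Tr(|A|U^{*}U)\leq\Tr|A|$. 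With that repair the argument for (i) goes through, and the rest of your sketch, including the observation that the cyclic property cannot be obtained by naively rearranging $\sum_i\langle ATe_i,e_i\rangle$ but needs the Hilbert--Schmidt layer, is sound.
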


For the theory of trace functionals and their applications the reader is referred to \cite{sim}.\\

For $A,B\geq 0$ we have $\Tr (AB)\leq \Tr(A)\Tr(B)$. Also, since  $f(t)=t^{\frac{1}{2}}$ is monotone  we have
\begin{equation}\label{l1}
\sqrt{\Tr(AB)}\leq \sqrt{\Tr(A)\Tr(B)}
\end{equation}
for positive operator $A$ and $B$ in $B(H)$.

We know that $f(t)=\Tr(t)$ is operator geometrically convex function \cite[p.513]{hor}, i.e. 
$$\Tr(A^{t}B^{1-t})\leq \Tr (A)^{t}\Tr(B)^{1-t}$$
for $0\leq t\leq 1$ and positive operators $A,B\in B_{1}(H) $.\\
For commutative case, we have
$$\sqrt{\Tr(AB)}\leq \Tr(\sqrt{AB})\leq \sqrt{\Tr(A)\Tr(B)},$$
since $(\Tr (AB))^{\frac{1}{2}}\leq \Tr(AB)^{\frac{1}{2}}$.\\

\noindent Moreover, by Theorem \ref{thm} we can write
\begin{eqnarray*}
\log \Tr(\sqrt{AB})&\leq& \int_{0}^{1}\log\Tr(A^{t}B^{1-t})dt\\
&\leq&\log\sqrt{\Tr (A)\Tr (B)}\\
&=&\frac{1}{2}(\log \Tr(A)+\log \Tr(B)).
\end{eqnarray*}

Let replace $A$ and $B$ by $A^{2}$ and $B^{2}$ in above inequality, respectively. By applying commutativity of algebra and knowing that $\Tr(A)^{2}\leq (\Tr A)^{2}$ for positive operator $A$, we have
\begin{equation*}
\log \Tr(AB)\leq \int_{0}^{1}\log\Tr(A^{2t}B^{2(1-t)})dt\leq\log\left(\Tr (A)\Tr (B)\right).
\end{equation*}

\section{\textbf{More results on trace functional class for product of operators}}
In this section we prove some trace functional class inequalities for operators which are not necessarily commutative.

We consider the wide class of unitarily invariant
norms $|||\cdot|||$. Each of these norms is defined on an ideal in
$B(H)$ and it will be implicitly understood that when we talk of
$|||T|||$, then the operator $T$ belongs to the norm ideal
associated with $|||\cdot|||$. Each unitarily invariant norm
$|||\cdot|||$ is characterized by the invariance property
$|||UTV|||=|||T|||$ for all operators $T$ in the norm ideal
associated with $|||\cdot|||$ and for all unitary operators $U$ and
$V$ in $B(H)$.
 For $1\leq
p<\infty$, the Schatten $p$-norm of an operator $A\in B_{1}(H)$
defined by $\|A\|_{p}=(\Tr |A|^{p})^{1/p}$. These Schatten
$p$-norms are unitarily invariant.

In \cite{bha2}, Bhatia and Davis proved the following inequality
\begin{equation}\label{bha}
||||A^{*}XB|^{r}|||^{2}\leq ||||AA^{*}X|^{r}|||.||||XBB^{*}|^{r}|||
\end{equation}
for all operators $A$, $B$, $X$ and $r\geq 0$.\\
As we know,  $\|A\|_{1}=\Tr|A|$. 
From (\ref{bha}) for $p=1$, we have 
\begin{equation}\label{e1}
\||A^{*}XB|^{r}\|_{1}^{2}\leq\||AA^{*}X|^{r}\|_{1}.\||XBB^{*}|^{r}\|_{1}.
\end{equation}
So, by inequality (\ref{e1}), we can write  
\begin{equation}\label{ph}
(\Tr|A^{*}XB|^{r})^{2}\leq \Tr(|AA^{*}X|^{r})\Tr(|XBB^{*}|^{r}),
\end{equation}
for all operators $A, B\in B_{1}(H)$, $X\in B(H)$ and $r\geq 0$.\\
Let, $X=I$ in above inequality, we have
\begin{equation*}
(\Tr|A^{*}B|^{r})^{2}\leq \Tr(|AA^{*}|^{r})\Tr(|BB^{*}|^{r}).
\end{equation*}
Moreover, let $r=1$ in inequality (\ref{ph}), we have
\begin{equation*}
|\Tr(A^{*}XB)|^{2}\leq\left(\Tr|A^{*}XB|\right)^{2}\leq \Tr(|AA^{*}X|)\Tr(|XBB^{*}|).
\end{equation*}
Put $X^{*}$ instead of $X$  and applying the property of trace we have
\begin{equation}\label{thm23}
|\Tr(AB^{*}X)|^{2}\leq \Tr(|AA^{*}X^{*}|)\Tr(|X^{*}BB^{*}|),
\end{equation}
for all $A, B\in B_{1}(H)$ and $X\in B(H)$.\\

Let $X=I$ in (\ref{thm23}).
\begin{corollary}\label{c1}
Let $A, B\in B_{1}(H)$. Then 
\begin{equation}\label{and}
|\Tr(AB^{*})|^{2}\leq \Tr(AA^{*})\Tr(BB^{*}).
\end{equation}

\end{corollary}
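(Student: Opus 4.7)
The plan is essentially a one-line deduction from the preceding inequality (\ref{thm23}), so the proof proposal reduces to specifying the substitution and cleaning up the absolute value signs. Concretely, I would start from
$$|\Tr(AB^{*}X)|^{2}\leq \Tr(|AA^{*}X^{*}|)\Tr(|X^{*}BB^{*}|),$$
which was derived from the Bhatia--Davis inequality (\ref{bha}) specialized to the Schatten $1$-norm and then rearranged via $X \mapsto X^{*}$ and the cyclic property of the trace.

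Next, I would set $X = I$. The left-hand side collapses to $|\Tr(AB^{*})|^{2}$ immediately. On the right-hand side the expressions $AA^{*}$ and $BB^{*}$ appear inside the absolute value; here I would invoke the standard fact that for any $T \in B(H)$, the operator $TT^{*}$ is self-adjoint and positive, so $|TT^{*}| = TT^{*}$. Applying this with $T = A$ and $T = B$ removes the absolute values, producing $\Tr(AA^{*})\Tr(BB^{*})$ on the right.

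Combining these two observations yields
$$|\Tr(AB^{*})|^{2}\leq \Tr(AA^{*})\Tr(BB^{*}),$$
which is precisely the claim. There is no real obstacle: the heavy lifting was already done in establishing (\ref{ph}) and (\ref{thm23}), and the corollary is a clean specialization. The only point to be careful about is justifying the disappearance of the absolute values, which follows from positivity of $AA^{*}$ and $BB^{*}$ together with the spectral definition of $|\cdot|$ via the continuous functional calculus recalled in the introduction.
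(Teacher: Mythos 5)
Your proof is correct and is exactly the paper's argument: the corollary is obtained by setting $X=I$ in (\ref{thm23}), with the absolute values removed because $AA^{*}$ and $BB^{*}$ are positive. Your extra remark justifying $|TT^{*}|=TT^{*}$ is a welcome bit of care the paper leaves implicit.
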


In \cite[Theorem 5]{dra}, Dragomir proved the following inequality for $X\in B(H)$, $A,B\in B_{1}(H)$ and $\alpha\in[0,1]$

$$|\Tr(AB^{*}X)|^{2}\leq \Tr\left(|A^{*}|^{2}|X|^{2\alpha}\right)\Tr\left(|B^{*}|^{2}|X^{*}|^{2(1-\alpha)}\right).$$

Here, we give a generalization for above inequality when $\alpha\in\mathbb{R}$.
\begin{theorem}\label{pos}
Let $X\in B_{1}(H)$, $A, B\in B(H)$ and $\alpha\in \mathbb{R}$. Then
\begin{equation}\label{rdra}
|\Tr(AB^{*}|X|)|^{2}\leq \Tr\left(|A^{*}|^{2}|X|^{2\alpha}\right)\Tr\left(|B^{*}|^{2}|X|^{2(1-\alpha)}\right).
\end{equation}
\end{theorem}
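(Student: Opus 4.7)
The plan is to reduce the inequality to the Cauchy--Schwarz inequality for the Hilbert--Schmidt inner product $\langle T,S\rangle_{HS}:=\Tr(S^{*}T)$, after splitting $|X|$ symmetrically via functional calculus. Since $|X|\geq 0$, for any $\alpha\in\mathbb{R}$ the powers $|X|^{\alpha}$ and $|X|^{1-\alpha}$ are defined through the continuous (or Borel) functional calculus on $|X|$, they commute with each other, and they satisfy $|X|^{\alpha}|X|^{1-\alpha}=|X|$. Substituting this factorisation and then using cyclicity of the trace yields
\[
\Tr(AB^{*}|X|)=\Tr\bigl(AB^{*}|X|^{1-\alpha}|X|^{\alpha}\bigr)=\Tr\bigl(B^{*}|X|^{1-\alpha}\cdot|X|^{\alpha}A\bigr),
\]
which I would recognise as $\langle |X|^{\alpha}A,\,|X|^{1-\alpha}B\rangle_{HS}$.

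Next I would apply the Cauchy--Schwarz inequality
\[
|\langle T,S\rangle_{HS}|^{2}\leq \Tr(T^{*}T)\,\Tr(S^{*}S)
\]
with $T=|X|^{\alpha}A$ and $S=|X|^{1-\alpha}B$. Computing each factor with one more application of cyclicity and the identity $AA^{*}=|A^{*}|^{2}$ gives
\[
\Tr(T^{*}T)=\Tr\bigl(A^{*}|X|^{2\alpha}A\bigr)=\Tr\bigl(|A^{*}|^{2}|X|^{2\alpha}\bigr),
\]
and symmetrically $\Tr(S^{*}S)=\Tr\bigl(|B^{*}|^{2}|X|^{2(1-\alpha)}\bigr)$. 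Combining these is exactly the stated inequality.

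The main point that needs care, and the reason the statement is a genuine extension of the earlier result of Dragomir restricted to $\alpha\in[0,1]$, is ensuring that the quantities involved make sense for arbitrary real $\alpha$. For $\alpha\notin[0,1]$ one must either assume $|X|$ to be invertible, or interpret $|X|^{\alpha}$ via the functional calculus on the support projection of $|X|$, the inequality then being understood as trivially true when the right-hand side is infinite. Under $X\in B_{1}(H)$ and $\alpha\in[0,1]$, the operators $|X|^{\alpha}A$ and $|X|^{1-\alpha}B$ automatically lie in appropriate Schatten classes (in particular Hilbert--Schmidt when $\alpha=\tfrac{1}{2}$, and more generally the traces on the right are finite by Hölder-type estimates for Schatten norms), so no integrability issue arises. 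Outside of $[0,1]$, the only extra ingredient needed is to observe that if either trace on the right-hand side is $+\infty$ the inequality is vacuous, while otherwise the Hilbert--Schmidt computation above is valid verbatim.
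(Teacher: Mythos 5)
Your proof is correct and follows essentially the same route as the paper: the paper applies its Corollary \ref{c1} (the Hilbert--Schmidt Cauchy--Schwarz inequality $|\Tr(AB^{*})|^{2}\leq \Tr(AA^{*})\Tr(BB^{*})$, obtained there as a special case of the Bhatia--Davis inequality) to the pair $|X|^{\alpha}A$, $|X|^{1-\alpha}B$ and then uses cyclicity of the trace, exactly as you do. Your extra remarks on interpreting $|X|^{\alpha}$ for $\alpha\notin[0,1]$ and on the inequality being vacuous when the right-hand side is infinite address a point the paper leaves implicit, but the core argument coincides.
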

\begin{proof}
Let replace $A$ and $B$ in Corollary \ref{c1} with $|X|^{\alpha}A$ and $|X|^{(1-\alpha)}B$, where $\alpha\in \mathbb{R}$. It follows that
\begin{eqnarray*}
|\Tr(AB^{*}|X|)|^{2}&\leq& \Tr\left(|X|^{\alpha}AA^{*}|X|^{\alpha}\right)\Tr\left(|X|^{(1-\alpha)}BB^{*}|X|^{(1-\alpha)}\right)\\
&=&\Tr\left(AA^{*}|X|^{\alpha}|X|^{\alpha}\right)\Tr\left(BB^{*}|X|^{(1-\alpha)}|X|^{(1-\alpha)}\right)\\
&=&\Tr\left(|A^{*}|^{2}|X|^{2\alpha}\right)\Tr\left(|B^{*}|^{2}|X|^{2(1-\alpha)}\right).
\end{eqnarray*}
So, we have 
$$|\Tr(AB^{*}|X|)|^{2}\leq \Tr\left(|A^{*}|^{2}|X|^{2\alpha}\right)\Tr\left(|B^{*}|^{2}|X|^{2(1-\alpha)}\right).$$
\end{proof}
\noindent Let $A=B=I$ in Theorem \ref{pos}, we have
$$|\Tr(X)|^{2}\leq \Tr\left(|X|^{2\alpha}\right)\Tr\left(|X|^{2(1-\alpha)}\right),$$
for $X\in B_{1}(H)$ and $\alpha\in\mathbb{R}$. Above inequality is a refinement for \cite[Inequality (3.1)]{dra}.\\
Also,
 let $X\in B_{1}(H)$ and normal operators $A, B\in B(H)$. For $\alpha\in \mathbb{R}$, we have
\begin{equation*}
|\Tr(AB^{*}|X|)|^{2}\leq \Tr\left(|A|^{2}|X|^{2\alpha}\right)\Tr\left(|B|^{2}|X|^{2(1-\alpha)}\right).
\end{equation*}

In \cite[Theorem 2.3]{dan}, F. M. Dannan proved that if $S_{i}$ and $T_{i}$ ($i=1,2,\ldots,n$) are positive definite matrices, then we have
\begin{equation}\label{pd}
\left(\Tr\sum_{i=1}^{n}S_{i}T_{i}\right)^{2}\leq \Tr\left(\sum_{i=1}^{n}S_{i}^{2}\right)\Tr\left(\sum_{i=1}^{n}T_{i}^{2}\right).
\end{equation}

\noindent Moreover, if
$S_{i}T_{i}\geq 0$, ($i=1,2,\ldots,n$). Then
\begin{eqnarray*}
\Tr\left(\sum_{i=1}^{n}S_{i}T_{i}\right)^{2}&\leq& \left(\Tr\sum_{i=1}^{n}S_{i}T_{i}\right)^{2}\\
&\leq&\Tr\left(\sum_{i=1}^{n}S_{i}^{2}\right)\Tr\left(\sum_{i=1}^{n}T_{i}^{2}\right).
\end{eqnarray*}
So,

$$\left(\Tr\sum_{i=1}^{n}S_{i}T_{i}\right)^{2}\leq \Tr\left(\sum_{i=1}^{n}S_{i}^{2}\right)\Tr\left(\sum_{i=1}^{n}T_{i}^{2}\right).$$

Here, we prove inequality (\ref{pd}) for arbitrary operators.

\begin{theorem}\label{poco}
Let $S_{i}$ and $T_{i}$ ($i=1,2,\ldots,n$) be arbitrary operators in $B_{1}(H)$. Then,
\begin{equation}
\left|\Tr\left(\sum_{i=1}^{n}S_{i}T_{i}^{*}\right)\right|^{2}\leq \Tr\left(\sum_{i=1}^{n}S_{i}S_{i}^{*}\right)\Tr\left(\sum_{i=1}^{n}T_{i}T_{i}^{*}\right).
\end{equation}
\end{theorem}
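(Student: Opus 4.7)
The plan is to recognise the inequality as the Cauchy--Schwarz inequality of Corollary \ref{c1} dressed up in block form. Form the row-block operators
$$\mathcal{S}\colon H^{n}\to H,\qquad \mathcal{S}(x_1,\ldots,x_n)=\sum_{i=1}^{n}S_ix_i,$$
and $\mathcal{T}$ analogously from the $T_i$, where $H^{n}=H\oplus\cdots\oplus H$ is the orthogonal direct sum of $n$ copies of $H$. Their adjoints send $y\in H$ to $(S_i^*y)_{i=1}^n$ and $(T_i^*y)_{i=1}^n$ respectively, so a straightforward composition yields
$$\mathcal{S}\mathcal{S}^*=\sum_{i=1}^{n}S_iS_i^*,\qquad \mathcal{T}\mathcal{T}^*=\sum_{i=1}^{n}T_iT_i^*,\qquad \mathcal{S}\mathcal{T}^*=\sum_{i=1}^{n}S_iT_i^*,$$
each a trace class operator on $H$, since $B_1(H)$ is a $*$-ideal and the sums are finite.

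Once this bookkeeping is in place, the desired inequality is exactly Corollary \ref{c1} applied to the pair $(\mathcal{S},\mathcal{T})$. The only point requiring care is that $\mathcal{S}$ and $\mathcal{T}$ map between two different Hilbert spaces ($H^{n}$ and $H$), whereas Corollary \ref{c1} was stated on a single space; I expect this to be the main, and indeed only, obstacle. It is purely cosmetic, however: the argument behind Corollary \ref{c1} is simply the Cauchy--Schwarz inequality for the bilinear form $(A,B)\mapsto \Tr(B^*A)$, and this form is defined verbatim on Hilbert--Schmidt operators between any two Hilbert spaces, so the proof transfers without change.

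If one prefers to stay strictly within operators on $H$, an essentially identical route is to write
$$\Tr\!\left(\sum_{i=1}^{n}S_iT_i^*\right)=\sum_{i=1}^{n}\Tr(T_i^*S_i)=\sum_{i=1}^{n}\langle S_i,T_i\rangle_{2},$$
where $\langle A,B\rangle_{2}:=\Tr(B^*A)$ is the Hilbert--Schmidt inner product (well-defined because $B_1(H)$ embeds into the Hilbert--Schmidt class), and then apply the scalar Cauchy--Schwarz inequality to the $n$-tuples $(S_1,\ldots,S_n)$ and $(T_1,\ldots,T_n)$ in the direct-sum space; the resulting sums of squared norms regroup, using $\Tr(S_i^*S_i)=\Tr(S_iS_i^*)$, exactly into $\Tr\bigl(\sum_{i}S_iS_i^*\bigr)\Tr\bigl(\sum_{i}T_iT_i^*\bigr)$. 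Either presentation reduces the theorem to a single application of Cauchy--Schwarz once the block/sum is organised appropriately.
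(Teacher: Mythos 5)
Your proposal is correct and follows essentially the same route as the paper: both reduce the statement to Corollary \ref{c1} by assembling the $S_i$ and $T_i$ into a single row-block operator so that $\mathcal{S}\mathcal{T}^{*}=\sum_i S_iT_i^{*}$, $\mathcal{S}\mathcal{S}^{*}=\sum_i S_iS_i^{*}$ and $\mathcal{T}\mathcal{T}^{*}=\sum_i T_iT_i^{*}$. The only difference is cosmetic: the paper sidesteps your two-space concern by padding the row with zero rows, obtaining square block operators on $H^{n}$ and applying Corollary \ref{c1} verbatim there, whereas you either transfer the Cauchy--Schwarz argument to rectangular operators or invoke the Hilbert--Schmidt inner product directly, both of which are fine.
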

\begin{proof}
Let $A=\left(
    \begin{array}{ccccc}
    S_{1} & S_{2}&\ldots  &S_{n}  \\
    0&  0 & \ldots   &0  \\
     \vdots  &\vdots & \ddots   &\vdots  \\
        0  &  0       &   0     & 0\\
  \end{array}\right)$ and $B=\left(
    \begin{array}{ccccc}
    T_{1} & T_{2}&\ldots  &T_{n}  \\
    0&  0 & \ldots   &0  \\
     \vdots  &\vdots & \ddots   &\vdots  \\
        0  &  0       &   0     & 0\\
  \end{array}\right)$. So, we have
 $$AB^{*}=\left(
    \begin{array}{cccc}
   \sum_{i=1}^{n}S_{i}T_{i}^{*} &0&\ldots  &0  \\
    0&  0 & \ldots   &0  \\
     \vdots  &\vdots & \ddots   &\vdots  \\
        0  &  0       &   0     & 0\\
  \end{array}\right),$$ 
  $$AA^{*}=\left(\begin{array}{cccc}
   \sum_{i=1}^{n} S_{i}S_{i}^{*}    &0&\ldots  &0  \\
    0&  0 & \ldots   &0  \\
     \vdots  &\vdots & \ddots   &\vdots  \\
        0  &  0       &   0     & 0\\
  \end{array}\right),$$ 
  $$BB^{*}=\left(\begin{array}{cccc}
   \sum_{i=1}^{n} T_{i}T_{i}^{*}    &0&\ldots  &0  \\
    0&  0 & \ldots   &0  \\
     \vdots  &\vdots & \ddots   &\vdots  \\
        0  &  0       &   0     & 0\\
  \end{array}\right).$$
Put $A$ and $B$ in inequality (\ref{and}), by property of trace, we obtain the desired result.

\end{proof}

\begin{corollary}
Let $S_{i}$ and $T_{i}$ ($i=1,2,\ldots,n$) be positive operators in $B_{1}(H)$. Then,  we have 
$$\left(\Tr\sum_{i=1}^{n}S_{i}T_{i}\right)^{2}\leq \Tr\left(\sum_{i=1}^{n}S_{i}^{2}\right)\Tr\left(\sum_{i=1}^{n}T_{i}^{2}\right).$$
\end{corollary}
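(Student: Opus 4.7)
The plan is to derive this corollary as a direct specialization of Theorem \ref{poco}. Since each $S_i$ and $T_i$ is positive, they are in particular self-adjoint, so $S_i^{*}=S_i$ and $T_i^{*}=T_i$. Substituting these identities into the inequality
$$\left|\Tr\left(\sum_{i=1}^{n}S_{i}T_{i}^{*}\right)\right|^{2}\leq \Tr\left(\sum_{i=1}^{n}S_{i}S_{i}^{*}\right)\Tr\left(\sum_{i=1}^{n}T_{i}T_{i}^{*}\right)$$
of Theorem \ref{poco}, the left-hand side becomes $\left|\Tr\sum_{i=1}^{n}S_{i}T_{i}\right|^{2}$, and the right-hand side becomes $\Tr\left(\sum_{i=1}^{n}S_{i}^{2}\right)\Tr\left(\sum_{i=1}^{n}T_{i}^{2}\right)$.

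The only remaining point is to remove the absolute value on the left-hand side, i.e.\ to show that $\Tr\sum_{i=1}^{n}S_{i}T_{i}\geq 0$. For this, I would argue term by term: for any two positive operators $S, T\in B_{1}(H)$, the operator $S^{1/2}TS^{1/2}$ is positive, and the cyclic property of the trace gives
$$\Tr(S_{i}T_{i})=\Tr\bigl(S_{i}^{1/2}T_{i}S_{i}^{1/2}\bigr)\geq 0.$$
Summing over $i$ yields $\Tr\sum_{i=1}^{n}S_{i}T_{i}\geq 0$, so the absolute value may be dropped and the stated inequality follows.

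There is no real obstacle here; the work was already done in Theorem \ref{poco}, and the only extra ingredient is the observation that the trace of a product of two positive operators is a non-negative real number, which is a standard consequence of the cyclicity of the trace together with positivity of $S^{1/2}TS^{1/2}$.
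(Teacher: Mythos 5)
Your proposal is correct and follows essentially the same route as the paper: specialize Theorem \ref{poco} to the positive (hence self-adjoint) $S_i$, $T_i$, then drop the absolute value because $\Tr\left(\sum_{i=1}^{n}S_{i}T_{i}\right)=\sum_{i=1}^{n}\Tr(S_{i}T_{i})\geq 0$. The only difference is that you justify $\Tr(S_{i}T_{i})\geq 0$ explicitly via $\Tr\bigl(S_{i}^{1/2}T_{i}S_{i}^{1/2}\bigr)$, which the paper simply asserts.
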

\begin{proof}
By Theorem \ref{poco} for positive operators $S_{i}$ and $T_{i}$, we obtain
$$\left|\Tr\left(\sum_{i=1}^{n}S_{i}T_{i}\right)\right|^{2}\leq \Tr\left(\sum_{i=1}^{n}S_{i}^{2}\right)\Tr\left(\sum_{i=1}^{n}T_{i}^{2}\right).$$
Since $S_{i}$ and $T_{i}$ are positive operators, we have $\Tr(S_{i}T_{i})\geq0$. It follows that $\Tr(\sum_{i=1}^{n}S_{i}T_{i})\geq 0$ because $\Tr\left(\sum_{i=1}^{n}S_{i}T_{i}\right)=\sum_{i=1}^{n}\Tr(S_{i}T_{i})$. So, 
$$\left(\Tr\sum_{i=1}^{n}S_{i}T_{i}\right)^{2}\leq \Tr\left(\sum_{i=1}^{n}S_{i}^{2}\right)\Tr\left(\sum_{i=1}^{n}T_{i}^{2}\right).$$
\end{proof}


\end{document}